\newcommand{\out}[1]{\ }
\newcommand{\DD}{{\mathbb D}}
\newcommand{\calc}{{\mathcal C}}
\newcommand{\RR}{{\mathbb R}}
\newcommand{\CC}{{\mathbb C}}
\renewcommand{\ge}{\geqslant}
\renewcommand{\le}{\leqslant}
\renewcommand{\phi}{\varphi}
\renewcommand{\epsilon}{\varepsilon}
\renewcommand{\Im}{\im}
\newcommand{\OUT}[1]{deleted}
\newcommand{\eps}{\varepsilon}
\DeclareMathOperator{\isdef}{\overset{def}{=}}
\DeclareMathOperator{\im}{Im}
\DeclareMathOperator{\psh}{PSH}
\DeclareMathOperator{\Capa}{Cap}
\newtheorem*{rep@theorem}{\rep@title}
\newcommand{\newreptheorem}[2]{%
\newenvironment{rep#1}[1]{%
 \def\rep@title{#2 \ref{##1}}%
 \begin{rep@theorem}}%
 {\end{rep@theorem}}}
\newtheorem{theorem}{Theorem}[section]
\newtheorem{corollary}[theorem]{Corollary}
\newtheorem{lemma}[theorem]{Lemma}
\newtheorem{conjecture}[theorem]{Conjecture}
\theoremstyle{definition}
\newtheorem{definition}[theorem]{Definition}
\theoremstyle{remark}
\newtheorem{remark}[theorem]{Remark}
\numberwithin{equation}{section}
\newtheoremstyle{case}
{3pt}
  {3pt}
  {}
  {}
  {\bfseries}
  {:}
  {.5em}
  {}
\theoremstyle{case}
\numberwithin{subcase}{case}
\begin{document}
\title{Pluripolar hulls and fine holomorphy}
\dedicatory{To Jaap Korevaar on the occasion of his 100-th birthday,\\ in admiration and gratitude}
\author{Jan Wiegerinck}
\address{KdV Institute for Mathematics
\\University of Amsterdam
\\Science Park 105-107
\\P.O. box 94248, 1090 GE Amsterdam
\\The Netherlands}
\email{j.j.o.o.wiegerinck@uva.nl}
\thanks{Part of this work was done while the author visited Jagiellonian University in Krakow. He is grateful to Armen Edigarian, Lukasz Kosinski, and W\l odzimierz Zwonek for the invitation and for very stimulating discussions.}

\subjclass[2020]{30G12, 30A14, 31C10, 31C40, 32U05, 32U15}
\keywords{finely holomorphic function, pluripolar hull, fine analytic continuation}
\begin{abstract} 
Examples by Poletsky and the author and by Zwonek show the existence of nowhere extendable holomorphic functions with the property that the pluripolar hull of their graphs is much larger than the graph of the respective functions, and contains multiple sheets.  We will explain this phenomenon by fine analytic continuation of the function over part of a Cantor-type set involved. This gives more information on the hull, and  allows for weakening and effectiveness of the conditions in the original examples.
\end{abstract}
\maketitle
\section{Introduction}
We study the relation between pluripolar hulls of graphs of certain holomorphic functions and fine holomorphy. These notions are recalled in Section 2. Pluripolar hulls are of importance because of an old result of Zeriahi \cite{Ze} that roughly states that a pluripolar set $X\subset\CC^n$ is {\em complete} if it is an $F_\sigma$ as well as $G_\delta$ and equal to its pluripolar hull. The last two conditions are necessary too. Recall that $X\subset\CC^n$ is pluripolar if it is contained in the $-\infty$ set of a plurisubharmonic function, and it is complete if it equals the $-\infty$ set of such a function.

 Levenberg, Martin, and Poletsky in \cite{LMP} conjectured that the pluripolar hull of a graph of an analytic function $f$ is equal to the graph of the maximal analytic extension of $f$ in the sense of Weierstrass. This conjecture was refuted in \cite{EdW1}. Other counterexamples to this conjecture showed remarkable behavior: the hull could extend far beyond the domain of the maximal extension and could consist of many sheets, \cite{ESZ,PW,Z}. The connection with fine holomorphy was made in \cite{EJ}. This eventually led to Theorem \ref{thmEW}, which together with the observation in \cite{EEW} that the examples from \cite{EdW1,EdW3} are all instances of finely analytic continuation, suggested the following refined conjecture.
 
 \begin{conjecture} \cite{JW12} The pluripolar hull of a graph of an analytic function $f$ consists of the graph of the (possibly multi-valued) maximal finely analytic extension of $f$. 
 \end{conjecture}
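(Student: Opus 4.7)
The plan is to split the conjecture into two inclusions: (i) every point on the graph of a finely analytic extension of $f$ lies in the pluripolar hull $(\Gamma(f))^*_{\CC^2}$, and (ii) conversely, every point of the hull lies on the graph of some fine extension. I would expect (i) to be within reach using Theorem \ref{thmEW} and the results of \cite{EJ,EEW}, while (ii) is the deep half.

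For the forward inclusion, let $g$ be a finely holomorphic extension of $f$ on a fine domain $U$. By Fuglede's description, $g$ is locally a uniform limit, on a family of compacts of large Wiener measure exhausting $U$, of classical holomorphic functions on small Euclidean disks. Each such classical approximant can be viewed as coming from ordinary analytic continuation of $f$ inside the pluripolar hull, so the graphs of the approximants lie in $(\Gamma(f))^*_{\CC^2}$. A stability argument for pluripolar hulls under these limits --- the kind used in the passage from \cite{EdW1,EdW3} examples to Theorem \ref{thmEW} --- would then place $\Gamma(g)$ itself inside the hull.

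For the reverse inclusion, fix $(z_0,w_0)\in(\Gamma(f))^*_{\CC^2}$. The program is to construct a finely holomorphic $g$ on a fine neighborhood of $z_0$ with $g(z_0)=w_0$ whose graph sits in the hull. I would examine the slice $\Sigma=(\Gamma(f))^*_{\CC^2}$ inside a small bidisk around $(z_0,w_0)$ and attempt to produce a finely closed local section $\{(z,g(z)):z\in U\}\subset\Sigma$, using the fine structure of pluripolar sets developed by El Marzguioui--Wiegerinck and coauthors. The candidate $g$ arises from choosing a branch of $\Sigma$ over a fine neighborhood $U$ of $z_0$; one must then verify that $g$ is genuinely finely holomorphic, e.g.\ by checking that it is a fine $L^1$ limit of classical holomorphic functions along the disks witnessing the pluripolarity.

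The main obstacle will be the reverse inclusion. The pluripolar hull is an envelope over an enormous class of \PSH{} functions, and turning this indirect definition into a concrete fine-analytic section requires a missing ingredient of potential theory: roughly, a fine analogue of Bishop-type structure theorems guaranteeing that a pluripolar set in $\CC^2$ that projects to a non-thin set decomposes locally into finely-holomorphic sheets. Multi-valuedness --- the many-sheeted hulls exhibited in \cite{ESZ,PW,Z} --- compounds the difficulty, since one must assemble locally defined sheets into a coherent, possibly infinite-sheeted global object, the ``fine Riemann surface'' of $f$. A realistic intermediate target is therefore to prove (ii) under additional assumptions, e.g.\ when the fiber $\Sigma\cap(\{z_0\}\times\CC)$ is finite or countable and discrete, or for the specific Cantor-type constructions underlying \cite{PW,Z}; this would at minimum isolate precisely where new potential-theoretic input is required.
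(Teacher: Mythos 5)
The statement you were asked about is a \emph{conjecture}, and the paper does not prove it: it only corroborates it by verifying both inclusions in special families of examples (Cantor-type products in Sections 4--5, Blaschke products in Section 6). Your proposal is likewise not a proof, and to your credit you say so; but let me pin down where your sketch matches what is actually available and where the genuine gap sits. The forward inclusion needs no new ``stability argument for pluripolar hulls under limits'': it is exactly Theorem \ref{thmEW} (Theorem 4.5 of \cite{EW2}), already quoted in the paper, subject to the standing hypothesis that the projection of the graph has positive capacity. Your proposed route to it --- approximating $g$ by classical holomorphic functions on Euclidean disks and passing graphs of approximants to the limit --- is not how that theorem is proved and would run into trouble, since a finely holomorphic function on a fine domain with empty Euclidean interior has no classical holomorphic approximants defined on open disks at all; the actual proof goes through finely plurisubharmonic functions and their continuity properties. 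So for direction (i) you should simply cite Theorem \ref{thmEW} rather than re-derive it.

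The reverse inclusion is where the conjecture is open, and your proposal supplies no argument for it --- the ``fine Bishop-type structure theorem'' decomposing a pluripolar set over a non-thin projection into finely holomorphic sheets does not exist in the literature, and inventing it is the whole problem. Note how the paper handles this in the one case where it does prove equality (Theorem \ref{thm5.3}): it does not extract fine sections from the hull; instead it \emph{constructs by hand} a single plurisubharmonic function $v$ on $\CC^2$ (Lemma \ref{fa}) whose $-\infty$ set over the relevant base cuts out exactly the graph of the fine extension, which bounds the hull from above. That is the only known technique for the upper bound, it relies on very explicit quantitative control of the partial products ($c_j=(j+2)!$), and it is the opposite of a soft structure theorem. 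Your suggested intermediate target --- proving (ii) under finiteness or discreteness assumptions on the fibers --- is reasonable as a research direction, but as written your proposal establishes nothing beyond what Theorem \ref{thmEW} already gives, and the conjecture remains open.
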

 
In \cite{JW12} the author boldly stated without much proof that in all the known examples it can be seen that the pluripolar hull occurs because of finely analytic continuation. The purpose of this paper is to corroborate this statement and the conjecture. 
 
 \medbreak
Let $\calc=\calc(a,b)$, $a=(a_0, a_1,a_2,\cdots)$, $b=(b_0, b_1,b_2,\cdots)$, $a_j<b_j$, be a Cantor set in the real line formed by omitting  from the interval $[a_0,b_0]$ open intervals $(a_i,b_i)$, $i\ge 1$, in such a way that $[a_i,b_i]\cap [a_j,b_j]=\varnothing$ ($i > j\ge 0$) and the maximal length of an interval in $[a_0,b_0]\setminus\bigcup_{j=1}^N(a_j, b_j)$ tends to 0 for $N\to\infty$. Without loss of generality, we can and will assume that for all $i$ we have $b_{i+1}-a_{i+1}\le b_i-a_i$. 

In \cite{PW} Poletsky and the author studied the holomorphic function $f_{a,b}$ defined by
\[f_{a,b}(z)=\frac{z-b_0}{z-a_0}\prod_{i=1}^\infty \frac{z-a_i}{z-b_i} \]
and its  square roots, which are both well-defined single valued functions on the complement of $\calc(a,b)$. The main result here was that it is possible to choose the Cantor set in such a way that the \emph{pluripolar hull} in $\CC^2$ of the graph $\Gamma$ of each of these roots consists of precisely the graphs of the two square roots above the complement of the Cantor set. What happens above the Cantor set remained unclear. Then Zwonek \cite{Z} constructed holomorphic functions on the unit disc that can not be extended over the disc, but with the property that the pluripolar hull of their graph consists of infinitely many points above $\CC\setminus \{|z|=1\}$, see also Siciak, \cite{S}. In \cite{ESZ} similar results were obtained for functions on the complement of certain Cantor-like sets and, moreover, the hull above these Cantor sets was partially described.

We show that a computable condition on Cantor sets $\calc(a,b)$ in terms of properties of the sequences $a$ and $b$ guarantees that the pluripolar hull of a square root of $f_{a,b}$ 
indeed comes about because of fine analytic extension of this root over part of the Cantor set. This is done in Section 4.  In particular, it follows, as in \cite{ESZ}, that the pluripolar hull also contains points above this part of the  Cantor set. Under some extra conditions we show  in Section 5 that above this part the hull consists of precisely the graphs of the fine extension of the roots.  In Section 3 we deal with some elementary properties of $f_{a,b}$ and its square roots, and correct an error in \cite{PW}. 

In the final Section 6 we follow up on the examples in \cite{Z} and \cite{S}. In these articles Blaschke products $B$ with extra conditions on their zeroset are constructed. The domain of existence of $B$ is the unit disc and for every holomorphic function $f$ on a larger domain than the unit disc, the hull of the graph of $fB$ is strictly larger than the graph of $fB$ over the unit disc and may contain infinitely many sheets. We show that the Blaschke product $B$ and therefore the functions $fB$ admit finely analytic continuation over part of the unit circle. As in \cite{ESZ} the condition is local, and altogether less restrictive than the ones in \cite{Z,S}. Moreover, we obtain information on the hull over this part of the unit circle.

\section{Finely holomorphic functions and pluripolar hulls}

Recall that the fine topology on $\RR^n$, in particular on $\RR^2=\CC$ is the coarsest topology that makes all subharmonic functions continuous. See \cite{Fu1, Fu2, Doob, AG} for background on the fine topology. On finely open sets one can define \emph{finely holomorphic functions}. For our purposes the following definition is the most convenient. See \cite{Fu3, Fu4, FGW} for more information on finely holomorphic functions. 
\begin{definition}
Let $\Omega$ be  finely open subset of $\CC$. A function $f:\Omega\to \CC$ is called \emph{finely holomorphic} (or \emph{finely analytic}) on $\Omega$ if for every $z\in\Omega$ there exists a fine neighborhood $K_z\subset\Omega$, which can be taken to be compact in the Euclidean topology, with the property that $f|_{K_z}$ is a uniform limit of rational functions with poles off $K_z$.
\end{definition}
Note that if $\Omega$ is Euclidean open, then finely holomorphic functions on $\Omega$ are just ordinary holomorphic functions. Finely open sets of the form $\{z\in D: h(z)>c\}$, where $h$ is a subharmonic function on an open set $D$ in $\CC$, form a base of the fine topology.

Next let us recall that the ($\CC^n$-)pluripolar hull of a pluripolar set  $X\subset \CC^n$ is defined as
\[X^*_{\CC^n}\isdef\{z\in\CC^n:\ \forall u\in\psh(\CC^n)\ u|_X=-\infty\Rightarrow u(z)=-\infty\}.\]
Here $\psh(\CC^n)$ denotes the plurisubharmonic functions on $\CC^n$. See \cite{LP} for background and properties of pluripolar hulls. 

Note that if $X$ is a sufficiently large part of the graph of a holomorphic function $f$, (the projection on the domain of $f$ should have positive capacity), then $X^*_{\CC^n}$ contains the graph of any analytic extension of $f$.

Edlund and J\"oricke, \cite{EJ} observed a connection between pluripolar hulls of graphs and fine analytic extension.
This eventually led to the following result \cite[Theorem 4.5]{EW2}.
\begin{theorem}
\label{thmEW} Let $E$ be a set of positive capacity in $\CC$ and let $f$ be a finely holomorphic function on a fine neighborhood of $E$. Let $X$ denote the graph of $f$ over $E$. Then $X^*_{\CC^n}$ contains the graph of the (possibly multi-valued) maximal finely analytic continuation of $f$.
\end{theorem}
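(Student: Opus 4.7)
The plan is, given any $u\in\psh(\CC^n)$ with $u|_X\equiv-\infty$, to show that $u(z_0,g(z_0))=-\infty$ at every point $z_0$ of the (possibly multi-sheeted) finely open domain $W$ of the maximal finely analytic continuation $g$ of $f$. By construction $W$ contains a fine neighborhood of $E$ and $g|_E=f|_E$, so once the claim is established on $W$ the definition of $X^*_{\CC^n}$ immediately gives the theorem.

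The core step is showing that the composite $v(z):=u(z,g(z))$ is finely subharmonic on $W$. Locally I would use that finely holomorphic functions are uniform limits of rational functions on compact fine neighborhoods: for $z_0\in W$ pick such a compact $K\ni z_0$ with rationals $r_n\to g$ uniformly on $K$. The Euclidean-holomorphic maps $z\mapsto(z,r_n(z))$ give ordinary subharmonic functions $v_n(z)=u(z,r_n(z))$ on a Euclidean neighborhood of $K$. Combining upper semicontinuity of $u$, fine continuity of $g$, and Fuglede-type results that composition with a finely holomorphic map preserves (pluri)subharmonicity in the fine sense (cf.\ \cite{Fu3,Fu4,FGW}), one concludes that $v$ is finely subharmonic on $W$.

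Now $v|_E=u|_X\equiv-\infty$, while $E$ has positive capacity and is therefore non-polar. Since the $-\infty$-set of a finely subharmonic function that is not identically $-\infty$ on its fine component is polar, $v$ must equal $-\infty$ identically on the fine component of $W$ containing $E$. Maximal finely analytic continuation means that every sheet and every point of $W$ is reached by fine continuation from $E$, so this component is all of $W$, and hence $u(z_0,g(z_0))=-\infty$ for every $z_0\in W$. As $u\in\psh(\CC^n)$ was arbitrary, the graph of $g$ lies in $X^*_{\CC^n}$.

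The main obstacle is Step 2, the fine subharmonicity of $v=u\circ(\id,g)$. Because $u$ is only upper semicontinuous and may be $-\infty$ on sets of codimension one or two in $\CC^n$, the uniform approximants $v_n$ need not converge pointwise to $v$, so the standard ``uniform limit of subharmonic functions is subharmonic'' reasoning does not apply directly. One must exploit the fine-topology structure more carefully---for instance via the potential-theoretic characterization of fine subharmonicity, combined with the fact that each slice $w\mapsto u(z,w)$ is itself subharmonic---and handle the multi-valued continuation sheet by sheet. Once this ingredient is in place, the closing potential-theoretic argument is routine.
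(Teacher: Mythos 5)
The paper does not actually prove this statement: it is imported verbatim from \cite[Theorem 4.5]{EW2}, so there is no internal proof to compare against. Judged on its own merits, your architecture is the standard one and matches the proof in the cited literature: fix $u\in\psh(\CC^n)$ vanishing identically to $-\infty$ on $X$, show that $v(z)=u(z,g(z))$ is finely subharmonic on the fine domain of the continuation, and invoke Fuglede's theorem that a finely subharmonic function on a fine domain which is $-\infty$ on a non-polar set is identically $-\infty$ there. The closing steps --- positive capacity implies non-polar, and propagation sheet by sheet along a chain of finely holomorphic function elements because consecutive elements agree on a finely open, hence non-polar, set --- are indeed routine once fine subharmonicity of $v$ is in hand.

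But that composition step is where the entire proof lives, and you have not supplied it. You correctly diagnose why the naive argument fails: approximating $g$ uniformly by rational functions $r_n$ on a compact fine neighborhood $K$ and setting $v_n=u(\cdot,r_n(\cdot))$ gives subharmonic functions that need not converge to $v$ in any useful sense, since $u$ is merely upper semicontinuous and may equal $-\infty$ on large sets. A gesture toward ``Fuglede-type results'' and ``the potential-theoretic characterization of fine subharmonicity'' does not close this gap. The statement that $u\circ F$ is finely subharmonic (or $\equiv-\infty$) when $u$ is plurisubharmonic and $F=(\id,g)$ is a finely holomorphic map is precisely the main technical content of \cite{EEW} and \cite{EW2}; its proof requires substantially more input, e.g.\ the plurifine continuity of finely plurisubharmonic functions established in \cite{EW2}, together with approximation of the finely holomorphic map on compact fine neighborhoods whose complements have small capacity. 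As written, your argument is a correct and well-organized reduction of the theorem to that known lemma, not a self-contained proof of the theorem.
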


\section{The product $f_{a,b}$ and its square roots}
We denote by $d(z,A)$ the Euclidean distance from $z\in \CC$ to $A\subset \CC$. Writing
\[f_{a,b}(z)=\left(1+\frac{a_0-b_0}{z-a_0}\right)\prod_{i=1}^\infty \left(1+\frac{b_i-a_i}{z-b_i}\right)\]
we see that the product converges uniformly on sets \[\Omega_\delta=\{z: d(z,\calc(a,b))\ge\delta\}, \quad \delta>0,\]
because $0\le \sum_{i=1}^\infty (b_i-a_i)\le b_0-a_0$.
Hence $f_{a,b}$ represents a holomorphic function on the complement of $\calc(a,b)$. It is bounded at infinity and therefore extends to be holomorphic at infinity. The Laurent expansion on $\{|z|>|a_0|+|b_0|\}$ is 
\begin{equation}\label{eq1}
    f_{a,b}(z)=1+ \frac{c_1}{z}+\frac{c_2}{z^2}+\cdots,
\end{equation}
with $c_1= -(b_0-a_0)+\sum_{i=1}^\infty (b_i-a_i) $, so that $c_1$ equals the length (1-dimensional Lebesgue measure) of the Cantor set. 

If $\calc(a,b)$ has length zero it was claimed in \cite{PW} that $f_{a,b}$ is constant. This is in general not true. 
Consider $\log f_{a,b}$ a branch of the logarithm, which is well-defined in a neighborhood of $\infty$. Its derivative equals \[\left(\frac1{z-b_0}-\frac1{z-a_0}\right)-\sum_{i=1}^\infty\left(\frac1{z-b_i}-\frac1{z-a_i}\right)=\frac 1z \sum_{k=1}^\infty \frac{b_0^k-a_0^k-\sum_{j=1}^\infty b_j^k-a_j^k}{z^k}.\] 
If $\calc(a,b)$ has length zero, then the first term ($k=1$) in this sum vanishes, but the others generally  need not.

Let
\[f^N=f^N_{a,b}=\frac{z-b_0}{z-a_0}\prod_{i=1}^N \frac{z-a_i}{z-b_i} \]
be a partial product. There exist two holomorphic branches $g^N_+$ and $g^N_-$  of $\sqrt{f^N}$ on 
\[(\CC\setminus [a_0,b_0])\cup\left(  \bigcup_{j=1}^N (a_j, b_j)\right)\] with $\lim\limits_{z\to\infty} g^N_+(z)=1$ and $\lim\limits_{z\to\infty} g^N_-(z)=-1$. 

On 
\[(\CC\setminus\RR)\cup (a_0,b_0)\setminus \left(  \bigcup_{j=1}^N [a_j, b_j]\right)\] 
there are two branches  of $\sqrt{f^N}$: $\tilde g^N_+$ with $\lim\limits_{z\to\infty \atop \Im z>0}\tilde g^N_+(z)=1$, and $\tilde g^N_-$ with $\lim\limits_{z\to\infty \atop \Im z>0}\tilde g^N_-(z)=-1$.

In particular, it follows that $\lim\limits_{z\to\infty \atop \Im z<0}\tilde g^N_\pm(z)=\mp 1$, as $\tilde g^N_{\pm}=g^N_\mp$ on the lower half-plane.
The four functions $g^N_\pm$, $\tilde g^N_{\pm}$ are analytic extensions of each other and together form a 2-valued holomorphic function on $\CC\setminus\{ a_j,b_j, j=0\ldots N\}$.

Clearly $g^N_{\pm}$ both converge uniformly on compact sets to holomorphic square roots $g_{\pm}$ of $f_{a,b}$ on 
\begin{equation} \label{D}D=(\CC\setminus [a_0,b_0])\cup\left(  \bigcup_{j=1}^\infty (a_j, b_j)\right),\end{equation} 
while $\tilde g^N_{\pm}$ both converge uniformly on compact sets to square roots $\tilde g_{\pm}$ of $f_{a,b}$ on 
\begin{equation}\label{tildeD}\tilde D=\CC\setminus \RR.
\end{equation}

Obviously $g_{\pm}=\tilde g_{\pm}$ on the upper half-plane and $g_{\pm}=-\tilde g_{\pm}$ on the lower half-plane. The maximal analytic extension of each of these functions is a single valued function on $D$.

\section{Fine holomorphy on the Cantor set}
We start with a condition on $\calc(a,b)$ that ensures that $f_{a,b}$ converges uniformly on compact sets in  a set $E$ that contains a subset of $\calc(a,b)$ and is sufficiently big, namely that it is a fine neighborhood of certain points in $\calc(a,b)$. To make the connection with \cite{PW} we will write $b_j-a_j=e^{-jc_j}$, where $\{c_j\}_j$ is an increasing sequence of positive numbers tending to $\infty$.

The following lemma is obvious. 
\begin{lemma}\label{lem4.1} Let $(p_n)_n$ 
be a sequence of positive numbers with converging sum. 
 Then \begin{equation}\label{eq1a}
    \sum_{n=1}^\infty \frac{b_n-a_n}{|z-b_n|} \quad \text{ and } \quad \left(1+\frac{a_0-b_0}{z-a_0}\right)\prod_{i=1}^\infty \left(1+\frac{b_i-a_i}{z-b_i}\right)
\end{equation} converge uniformly on compact sets in 
\begin{equation}\label{eq4.1a} E_N=\bigcap_{n=N}^\infty\{z: b_n-a_n\le p_n |z-b_n|\}\setminus \{b_1,\ldots, b_{N-1}\},\quad N>0.
\end{equation} 
    \end{lemma}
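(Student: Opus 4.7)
The plan is a direct application of the Weierstrass M-test, used once for the series and once (after taking logarithms) for the product. The defining inequality of $E_N$ dominates the tail, while compactness handles the finitely many initial terms whose poles have been excised from $E_N$.

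For the series, the defining inequality of $E_N$ gives at once
\[
\frac{b_n - a_n}{|z - b_n|} \le p_n \qquad (n \ge N,\ z \in E_N),
\]
so $\sum_{n \ge N}(b_n-a_n)/|z-b_n|$ is dominated by the convergent $\sum p_n$ and converges uniformly on all of $E_N$. The finitely many initial terms $n = 1,\ldots,N-1$ are continuous on $E_N$ because the points $b_1,\ldots,b_{N-1}$ have been removed, and on any compact $K \subset E_N$ we have $d(K,\{b_1,\ldots,b_{N-1}\}) > 0$, which provides a uniform bound.

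For the product, pass to logarithms on the tail. Since $\sum p_n$ converges we have $p_n \to 0$, so there exists $M \ge N$ with $p_n \le 1/2$ for $n \ge M$. Then for any $z \in E_N$ and $n \ge M$,
\[
\left| \frac{b_n - a_n}{z - b_n} \right| \le p_n \le \tfrac12,
\]
and the elementary estimate $|\log(1+w)| \le 2|w|$ for $|w| \le 1/2$ yields $\bigl|\log(1 + (b_n-a_n)/(z-b_n))\bigr| \le 2p_n$. A second Weierstrass M-test then gives uniform convergence of $\sum_{n \ge M} \log(1 + (b_n-a_n)/(z-b_n))$ on $E_N$, so exponentiating, the tail product $\prod_{n \ge M}$ converges uniformly on $E_N$ to a nonvanishing function. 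The remaining finitely many factors, together with $1 + (a_0-b_0)/(z-a_0)$, are continuous and bounded on any compact $K \subset E_N$ disjoint from $\{a_0, b_1,\ldots,b_{N-1}\}$.

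The only thing to watch is bookkeeping: separating the dominated tail $n \ge N$ (controlled by the $E_N$-inequality) from the finitely many initial factors (controlled by compactness and the explicit removal of their poles). No real obstacle arises, consistent with the authors' description of the statement as obvious.
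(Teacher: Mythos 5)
Your proof is correct and is exactly the standard Weierstrass M-test argument the paper has in mind when it declares the lemma obvious (the paper supplies no proof at all): the defining inequality of $E_N$ gives the summable majorant $p_n$ for the tail, and compactness plus the excision of $b_1,\dots,b_{N-1}$ handles the finitely many initial terms. Your quiet caveat that the compact set must also avoid the pole $a_0$ of the first factor is a legitimate observation that the lemma's definition of $E_N$ glosses over.
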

Note that only if $\sum \frac{e^{-jc_j}}{p_j}$ is finite, $E_N\cap \calc(a,b)$ will eventually be nonempty.

The following Lemma is a variation of Lemma 2 and Example 14 in \cite{ESZ}.
\begin{lemma}\label{Lem4.2}  Let $\calc(a,b)$ be a Cantor set formed by sequences $a$, $b$ as before and let $\{c_j\}_j$ be an increasing sequence tending to $\infty$ with $b_j-a_j=e^{-jc_j}$, and such that
\begin{equation}
     \sum_{j=1}^\infty\frac{1}{jc_j}<\frac12.\label{eq4.2}
\end{equation}

Then for $N$ sufficiently large, the set $E_N$ in \eqref{eq4.1a} contains a fine neighborhood $E$ of a nonempty subset of $C(a,b)$.The functions in \eqref{eq1a} converge uniformly on compact subsets of $E$.
\end{lemma}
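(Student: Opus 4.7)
The plan is to set $p_j := 1/(jc_j)$. By \eqref{eq4.2} we then have $\sum_j p_j < 1/2$, so the hypothesis of Lemma \ref{lem4.1} is satisfied and the expressions in \eqref{eq1a} converge uniformly on compact subsets of $E_N$ and, a fortiori, of any subset $E \subset E_N$. What remains is to produce, for $N$ large enough, a nonempty subset $F \subseteq E_N \cap \calc(a,b)$ such that $E_N$ is a fine neighborhood of every $z_0 \in F$; equivalently, such that the union
\[ A_N \;:=\; \bigcup_{n\ge N} D(b_n, r_n), \qquad r_n := \frac{b_n-a_n}{p_n} = nc_n\, e^{-nc_n}, \]
is thin at each $z_0 \in F$. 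The finite set $\{b_1,\dots,b_{N-1}\}$ is negligible for the fine topology and can be removed from $E_N$ to obtain the set $E$.

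To witness thinness I would use the subharmonic barrier
\[ u(z) \;=\; \sum_{n=1}^{\infty} \frac{1}{nc_n}\,\log\frac{|z-b_n|}{b_0-a_0}, \]
which is subharmonic on $\CC$ because its positive coefficients are summable by \eqref{eq4.2} and each $\log|z-b_n|$ is subharmonic and bounded above on compact sets. On $D(b_n,r_n)$ the $n$-th summand is bounded above by $(nc_n)^{-1}\log(r_n/(b_0-a_0)) = -1 + o(1)$, independently of the other summands. For a point $z_0 \in \calc(a,b) \setminus \{b_j : j\ge 1\}$ satisfying
\begin{equation}\label{starcond}
\sum_{n=1}^{\infty} \frac{\bigl|\log|z_0-b_n|\bigr|}{nc_n} \;<\; \infty,
\end{equation}
both $u(z_0) > -\infty$ and $(nc_n)^{-1}\bigl|\log|z_0-b_n|\bigr| \to 0$. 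Splitting $u(z) = (n\text{-th term}) + (\text{remainder})$ for $z \in D(b_n, r_n)$ and passing to the limit $z \to z_0$ then yields $\limsup_{z\to z_0,\, z\in A_N}u(z) \le u(z_0) - 1 < u(z_0)$, which is the definition of thinness of $A_N$ at $z_0$.

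The remaining task is to exhibit at least one $z_0 \in \calc(a,b)$ for which \eqref{starcond} holds. I would choose a probability measure $\mu$ supported on $\calc(a,b)$ with bounded logarithmic potential (normalized Lebesgue measure on $\calc(a,b)$ when this set has positive length, or the equilibrium measure of a compact subset of positive capacity otherwise) and apply Fubini:
\[ \int \sum_{n=1}^\infty \frac{\bigl|\log|z-b_n|\bigr|}{nc_n}\,d\mu(z) \;=\; \sum_{n=1}^\infty \frac{1}{nc_n}\int \bigl|\log|z-b_n|\bigr|\,d\mu(z). \]
The inner integral is uniformly bounded in $n$ by the choice of $\mu$, so the right-hand side is finite; hence \eqref{starcond} holds $\mu$-almost everywhere, and the set $F$ of admissible $z_0$ has full $\mu$-measure.

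The main obstacle is the control of the remainder $\sum_{m\ne n}(mc_m)^{-1}\log(|z-b_m|/(b_0-a_0))$ in the barrier argument: indices $m$ with $|z_0-b_m|$ comparable to $|z-z_0|$ can make individual terms very negative. One has to use \eqref{starcond} to conclude that $|z_0-b_m| \gg r_m$ for large $m$, split the tail into a ``far'' part (handled by dominated convergence) and a ``near'' part (a finite count), and verify that the near part contributes only $o(1)$. The quantitative bound $1/2$ in \eqref{eq4.2} is exactly what simultaneously reconciles Lemma \ref{lem4.1}, the Fubini estimate, and the barrier argument with the single choice $p_j = 1/(jc_j)$.
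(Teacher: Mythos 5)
Your argument is correct in outline but follows a genuinely different route from the paper. The paper takes $p_j=e^{-jc_j/2}$, bounds the capacity of the union $F_N$ of the intervals $[a_i,b_i]$ and the excluded discs by the subadditivity estimate of \cite[Theorem 5.1.4]{Ra}, concludes $\Capa(F_N)<\tfrac14\le\Capa(J_N)$ for $J_N=[0,1]\cup F_N$, and exhibits the finely open set as $\{g_{F_N}-g_{J_N}>0\}$, a difference of Green functions; that this set meets $\calc(a,b)$ is then extracted from the boundary behaviour of the harmonic function $u\circ\varphi$ under the Riemann map of the complement of $J_N$. You replace all of this by an explicit subharmonic barrier $\sum_n (nc_n)^{-1}\log|z-b_n|$ witnessing thinness of the union of excluded discs at every point where the barrier is finite and its terms tend to $0$, together with a Fubini argument producing such points. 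This is more elementary (no capacity subadditivity, no Riemann map) and shows that finiteness of $\sum 1/(jc_j)$ already suffices for the thinness part; the paper's method has the advantage of producing the quantitative bound $\Capa(F_N)<\tfrac14$ and the specific sets $F_N$ and $E$ that are reused verbatim in Lemma \ref{fa}.

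Two points need tightening. First, your Fubini step presupposes a probability measure on $\calc(a,b)$ with uniformly bounded logarithmic potential; invoking ``the equilibrium measure of a compact subset of positive capacity'' is circular until you know $\calc(a,b)$ is non-polar, which is part of what is at stake. Fortunately, after the normalization $b_0-a_0=1$ the hypothesis \eqref{eq4.2} forces $c_j\ge c_1>2$, hence $\sum_{j\ge1}(b_j-a_j)<\sum_{j\ge1}e^{-2j}<1$, so $\calc(a,b)$ has positive length and normalized Lebesgue measure on it works; you should say this explicitly, since it is essentially the only place the constant $\tfrac12$ enters your argument. Second, the ``main obstacle'' you identify --- indices $m$ with $|z_0-b_m|$ comparable to $|z-z_0|$ making terms very negative --- is not an obstacle at all: you only need an \emph{upper} bound on $\limsup_{z\to z_0,\,z\in A_N}u(z)$, and very negative terms only help. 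Splitting the remainder into $m\le M$ (which converges to the corresponding partial sum at $z_0$) and $m>M$ (each term bounded above by $(mc_m)^{-1}\log 2$, so the tail is at most $\log 2\sum_{m>M}(mc_m)^{-1}$) and letting $M\to\infty$ closes the estimate; one should also record that the index $n(z)$ of the disc containing $z$ necessarily tends to $\infty$ as $z\to z_0$, which is what legitimizes both the $-1+o(1)$ bound on the $n$-th term and the removal of that term from the sum at $z_0$.
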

 
\begin{proof} We can assume $b_0-a_0=1$. Let 
 $p_j=e^{-jc_j/2}$ and set

\begin{equation}
F_N=\bigcup_{i=1}^\infty[a_i,b_i]\cup\bigcup_{n=N}^\infty\{z\in\CC:\ |z-b_n|\le(b_n-a_n)/p_n\}. 
\end{equation}
Note that $F_N$ is compact.

We have $\Capa[a_j,b_j]=e^{-jc_j}/4$ and $\Capa(\{|z-b_j|\le (b_j-a_j)/p_j\})= e^{-jc_j/2}$.
We now use Theorem 5.1.4 in \cite{Ra}, which estimates the capacity of a union of Borel sets. It reads in our setting
\begin{equation}
\begin{split}
    \frac{1}{\log(\frac1{\Capa(F_N)})}&\le \sum_{j=1}^\infty\frac{1}{\log(4e^{jc_j})}+\sum_{j=N}^\infty \frac{1}{\log(e^{jc_j/2})}\\
    &=\sum_{j=1}^\infty\frac{1}{jc_j+\log 4}+\sum_{j=N}^\infty \frac{2}{jc_j}.
    \end{split}
\end{equation}

Let $\eps>0$. By taking $N$ sufficiently large, the final sum $\sum_{j=N}^\infty$ in this equation can be made smaller than $\eps$. Then
\begin{equation}
    \frac{1}{\log(\frac1{\Capa(F_n)})}\le \sum_{j=1}^\infty \frac1{jc_j} +\eps,
\end{equation}
or 
\begin{equation}
    \Capa(F_N)\le \exp\left(\frac{-1}{\sum_{j=1}^\infty \frac1{jc_j}+\eps}\right)<\frac14,
\end{equation}
 if $\sum \frac1{jc_j} \le 1/2$ and $\eps$ is sufficiently small.

Define the compact set $J_N:=[0,1]\cup F_N$. Let
$$u(z):= g_{F_N}-g_{J_N},$$
where $g_X$ denotes the Green function of $X$ with pole at infinity. Being the difference of two subharmonic functions, $u$ is finely continuous, therefore $\tilde E=\{u>0\}$ is a finely open set. 
Since the complement of $F_N$ is contained in $E_N\cup\{b_1,\ldots, b_{N-1}\}$, and $u\equiv0$ on $F_N$, the functions in \eqref{eq1a} converge uniformly on compact sets in the finely open set  $E=\tilde E\setminus \{b_1,\ldots, b_{N-1}\}$.

It remains to show that $E$ contains points of $\calc(a,b)$. We have
$$u(\infty)=\log (\Capa J_N) -\log (\Capa F_N),$$
cf. \cite[Thm 5.2.1]{Ra}. Since $J_N$ contains $[0,1]$, $\Capa J_N\ge 1/4$, hence $u(\infty)>0$ if $N$ is sufficiently large.
Let $\varphi$ be the Riemann map of the unit disc to the complement $\Omega$ of $J_N$  that maps $0$ to $\infty$. Observe that the boundary of $\Omega$ is locally connected and regular for the Dirichlet problem, therefore $\varphi$ extends continuously to the unit circle and $G_{J_N}$ is a continuous function with $G_{J_N}\equiv 0$ on $J_N$. Note that for every $\theta$ the curve $r\mapsto \varphi(re^{i\theta})$, ($0\le r\le 1$) is non-thin at  $\varphi (e^{i\theta}) \in J_N$. 

Let $v=u\circ\varphi$. Then $v$ is harmonic on the unit disc and $v(0)=u(\infty)>0$. Since
$$v(0)=\frac 1{2\pi} \int_0^{2\pi}v^*(e^{i\theta})\, d\theta,$$
where $v^*(e^{i\theta})=\lim_{r\uparrow 1} v(re^{i\theta})$ a.e.,
it follow that  $\lim_{r\uparrow 1} v(re^{i\theta})>0$ on a subset $\Gamma$ of positive measure on the unit circle. 
 Using the above observations, we find for $\theta\in \Gamma$
$$u\circ\varphi(e^{i\theta})= g_{F_N}(\varphi(e^{i\theta}))- g_{J_N}(\varphi(e^{i\theta}))=\liminf_{r\uparrow 1}g_{F_N}(\varphi(re^{i\theta}))- g_{J_N}(\varphi(re^{i\theta}))=\lim_{r\uparrow 1}v(re^{i\theta})>0.$$
It follows that $\varphi(e^{i\theta})\in J_N\setminus F_N\subset \calc(a,b)$ and we are done.

\end{proof}
\begin{remark} It is a well-known consequence of Wiener's criterion that  relatively finely open subsets of $[0,1]$ have positive Lebesgue measure. We don't need this fact, but it strengthens the connection with \cite[Lemma 2]{ESZ}.
\end{remark}

\begin{theorem}\label{thm2}Assume that $\calc(a,b)$ satisfies the conditions of Lemma \ref{Lem4.2}. Let $E\subset\calc(a,b)$ be the set constructed in the lemma. The functions $f_{a,b}$ and $\tilde g_\pm$ extend over $E$ as finely holomorphic functions from the upper half plane to the lower half plane.
\end{theorem}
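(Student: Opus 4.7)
My plan is to verify fine holomorphy on $E$ directly: for each $z_0\in E$ I will produce a Euclidean-compact fine neighborhood $K\subset E$ of $z_0$ on which the function in question is a uniform limit of rational functions with poles off $K$. The construction of $K$ exploits that $u=g_{F_N}-g_{J_N}$ from the proof of Lemma~\ref{Lem4.2} is upper semicontinuous, being the difference of a subharmonic and a continuous function, so $\{u\ge c\}$ is Euclidean closed. Fixing $c\in(0,u(z_0))$ and a small Euclidean ball $B=B(z_0,r)$ whose closure avoids $F_N\cup\{a_0\}$, I set $K=\overline{B}\cap\{u\ge c\}$: this is Euclidean compact, lies in $\{u>0\}=E$, is disjoint from $F_N$ (since $u\le 0$ there), and contains the fine neighborhood $B\cap\{u>c\}$ of $z_0$.

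For $f_{a,b}$, Lemma~\ref{Lem4.2} immediately yields $f^N\to f_{a,b}$ uniformly on $K$; the poles $\{a_0,b_1,\ldots,b_N\}$ of $f^N$ lie in $F_N\cup\{a_0\}$, hence off $K$, so $f_{a,b}|_K$ is a uniform limit of rational functions with poles off $K$ and the extension is finely holomorphic.

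For $\tilde g_\pm$ I shrink $r$ further so that also $B\cap\RR\subset(a_0,b_0)$; then, since $B\cap F_N=\varnothing$ forces $B\cap\bigcup_{j\ge 1}[a_j,b_j]=\varnothing$, every $\tilde g^{N'}_\pm$ with $N'\ge N$ is holomorphic on all of $B$. The identity $(\tilde g^{N'}_\pm)^2=f^{N'}$ and the uniform bound of $|f^{N'}|$ on $B$ make $\{\tilde g^{N'}_\pm\}$ a normal family on $B$; combined with uniform convergence on $B\cap\{\Im z>0\}$ to $\tilde g_\pm$ (from Section~3), Vitali's theorem yields uniform convergence on compact subsets of $B$ to a holomorphic limit that extends $\tilde g_\pm$ across $B\cap\RR$ and matches $\tilde g_\pm$ on $B\cap\{\Im z<0\}$ as well, giving the fine analytic continuation from upper to lower half plane. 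Runge's theorem on $B$ then provides, for each $N'$, a rational function $R^{N'}$ with poles off $K$ satisfying $|R^{N'}-\tilde g^{N'}_\pm|<1/N'$ on $K$; together with $\tilde g^{N'}_\pm\to\tilde g_\pm$ uniformly on $K\subset B$, this gives $R^{N'}\to\tilde g_\pm$ uniformly on $K$ with all poles off $K$, completing the verification.

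The delicate point is the square-root case, where $\tilde g^{N'}_\pm$ are not rational and their natural domains depend on $N'$. It is resolved by the observation that a single Euclidean ball $B$ with $B\cap F_N=\varnothing$ automatically lies in the common domain of every $\tilde g^{N'}_\pm$ for $N'\ge N$, because $F_N$ already contains all the intervals $[a_j,b_j]$; this reduces the square-root case to a Vitali normal-family argument on a fixed Euclidean domain followed by classical Runge approximation.
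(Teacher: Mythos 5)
Your construction breaks at the very first step: for $z_0\in E\cap\calc(a,b)$ there is \emph{no} Euclidean ball $B(z_0,r)$ whose closure avoids $F_N$. Indeed $F_N\supset\bigcup_{i=1}^\infty[a_i,b_i]$ by its definition in Lemma \ref{Lem4.2}, and since the maximal length of the components of $[a_0,b_0]\setminus\bigcup_{j=1}^{M}(a_j,b_j)$ tends to $0$, every point of $[a_0,b_0]$ --- in particular every point of $\calc(a,b)$ --- is a limit of endpoints $a_j,b_j\in F_N$; hence $\overline{F_N}\supset\calc(a,b)$ and the ball you require does not exist. For the square-root part this is not a reparable technicality: your Vitali/normal-family argument on $B$ would produce an \emph{ordinary} holomorphic extension of $\tilde g_\pm$ across the interval $B\cap\RR$ containing $z_0$, which is impossible because the branch points $a_j,b_j$ accumulate at every point of $\calc(a,b)$ (the domain $D$ is the domain of existence of $g_\pm$, cf.\ \cite{PW}). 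The whole point of the fine topology here is that the good set is a fine neighborhood of $z_0$ but never a Euclidean one, so any step that runs classical function theory on a full Euclidean ball around $z_0$ is doomed.

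The repair is to work on the compact fine neighborhood $K=\overline{B(z_0,r)}\cap\{u\ge c\}$ itself, which \emph{does} avoid $F_N$ because $u\le 0$ there, and never on $B$. Shrinking $r$ so that $\overline{B}\cap\RR\subset(a_0,b_0)$ and $a_0\notin\overline B$, the set $K$ lies in the Euclidean open set $(\CC\setminus\RR)\cup\bigl((a_0,b_0)\setminus\bigcup_{j\le N'}[a_j,b_j]\bigr)$ on which $\tilde g^{N'}_\pm$ is holomorphic, for every $N'\ge 1$; Runge's theorem applied on a Euclidean neighborhood of $K$ (not of $B$) then yields rational approximants with poles off $K$, while uniform convergence $\tilde g^{N'}_\pm\to\tilde g_\pm$ on all of $K$ (including its real points) follows from Lemma \ref{lem4.1} together with a uniform lower bound for $|f^{N'}|$ on $K$, which permits a consistent choice of square roots. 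This is in substance what the paper's one-line proof does --- uniform convergence on compact subsets of $E$ plus the definition of fine holomorphy --- with the Runge step made explicit. Your treatment of $f_{a,b}$ itself survives under the same correction, since there you only need the poles of $f^{N'}$ to lie off $K$, and they do because they belong to $F_N\cup\{a_0\}$; but as written, the condition $\overline B\cap F_N=\varnothing$ on which you hang both halves of the argument is unsatisfiable.
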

\begin{proof}By Lemma \ref{Lem4.2} the functions $f^N$, and therefore the functions $g^N_{\pm}$ converge uniformly on compact sets in a finely open neighborhood $E$ of points of $\calc(a,b)$ to $f_{a,b}$, respectively $\tilde g_{\pm}$. By the definition it follows that their limits are finely holomorphic on  $E$. These limits clearly coincide with $f_{a,b}$, respectively $\tilde g_{\pm}$ on $E\cap(\CC\setminus\RR)$ and are therefore the sought for finely holomorphic extensions.
\end{proof}

\begin{remark} One may wonder if, perhaps, $f_{a,b}$ extends as a finely holomorphic function to the complement of the set of poles $\{b_j\}$. However, this is not the case. The polar set $\{b_j\}$ is of the first Baire category in its closure $\calc(a,b)$ and then Theorem 3.4. in \cite{FGW} states that $f_{a,b}$ would extend finely holomorphically over part of $\{b_j\}$, which is clearly not the case.
\end{remark}

\begin{corollary}\label{cor1} With $D$ and $\tilde D$ as in \eqref{D}, \eqref{tildeD} and assuming that $\calc(a,b)$ satisfies the conditions of Lemma \ref{Lem4.2}, the four finely analytic function elements $\tilde g_{\pm}$  on $\tilde D\cup E$  and $g_{\pm}$  on $D$ are (finely) holomorphic continuations of each other. 
Let $X$ denote the graph of $g_+$, then $X^*_{\CC^2}$ contains the graphs of $\tilde g_{\pm}$ over $\tilde D\cup E$ and the graph of $g_-$ over $D$. In particular $X^*_{\CC^2}$ consists of precisely two points over $\CC\setminus \calc(a,b)$ and at least two points over $E$.
\end{corollary}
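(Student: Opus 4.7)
The plan is to proceed in three movements: (i) exhibit $g_\pm$ and $\tilde g_\pm$ as branches of a single (multi-valued) maximal finely analytic continuation of $g_+$; (ii) apply Theorem \ref{thmEW} once to push every branch into $X^*_{\CC^2}$; (iii) count sheets above $D$ and above $E$.

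For (i), recall from Section 3 that $g_\pm=\tilde g_\pm$ on the upper half-plane and $g_\pm=-\tilde g_\pm=\tilde g_\mp$ on the lower half-plane. By Theorem \ref{thm2} the functions $\tilde g_+$ and $\tilde g_-$ each extend finely holomorphically across $E$, linking the two half-planes. Read from the upper half-plane, the extended $\tilde g_+$ is therefore a fine analytic continuation of $g_+|_{\{\im z>0\}}$ that arrives in the lower half-plane as $g_-$ (since $\tilde g_+=-g_+$ there); symmetrically $\tilde g_-$ continues $g_-|_{\{\im z>0\}}$ to $g_+$ on the lower half-plane. Combined with the Euclidean holomorphicity of $g_\pm$ on $D$ and of $\tilde g_\pm$ on $\tilde D$, this presents $g_\pm$ over $D$ and $\tilde g_\pm$ over $\tilde D\cup E$ as branches of one maximal (genuinely multi-valued) finely analytic continuation of $g_+$.

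For (ii), $D$ has positive capacity and $g_+$ is (finely) holomorphic on it, so Theorem \ref{thmEW} places the graph of every branch from (i) inside $X^*_{\CC^2}$ in one stroke. For (iii), both $(z,g_\pm(z))$ lie in the hull for every $z\in D$ and disagree except at the isolated zeros of $f_{a,b}$, which yields "at least two points" above $D=\CC\setminus\calc(a,b)$; likewise the pair $(z,\tilde g_\pm(z))$ gives the same lower bound above $E$, differing wherever $\tilde g_+(z)\ne 0$.

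The matching upper bound "precisely two above $D$" is the one genuine obstacle. It requires a psh function on all of $\CC^2$ whose $-\infty$-set meets $D\times\CC$ in exactly the two graphs $g_\pm$. The obvious candidate $\log|w^2-f_{a,b}(z)|$ is not admissible because $f_{a,b}$ has poles at the $b_j$; one has to pass to the polynomial approximants $F_N(z,w)=w^2Q_N(z)-P_N(z)$, where $f^N=P_N/Q_N$ is a partial product, normalize them on an exhaustion of $D$ by compact sets $K_N$, and assemble a convergent series such as $\sum 2^{-N}\log|F_N(z,w)|/\|F_N\|_{K_N}$. Verifying that the sum is psh on $\CC^2$ and that its $-\infty$-locus above $D$ collapses exactly to the two graphs is the crux; this is the argument already carried out in \cite{PW} for the very same family $f_{a,b}$, and the corollary borrows it wholesale.
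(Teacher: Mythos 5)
Your proposal is correct and follows essentially the same route as the paper: the continuation statement via Theorem \ref{thm2} together with the half-plane identities $g_\pm=\tilde g_\pm$ (upper) and $g_\pm=-\tilde g_\pm$ (lower), the inclusion of all branches in $X^*_{\CC^2}$ via Theorem \ref{thmEW}, and the upper bound of two points over $\CC\setminus\calc(a,b)$ imported from Theorem 3.6 of \cite{PW}, exactly as the paper does (noting only that that part of \cite{PW} uses just $c_n\to\infty$).
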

\begin{proof}
Note  that $\tilde D\cup E$ is finely connected, and $D$ is (finely) connected. Hence the first statement follows directly from Theorem \ref{thm2} and the fact that $g_{\pm}=\tilde g_{\pm}$ on the upper half plane and $g_{\pm}=-\tilde g_{\pm}$ on the lower half plane. 
That $X^*_{\CC^2}$ consists of at most two points $(z,w)$ for $z\in \CC\setminus \calc(a,b)$ is part of Theorem 3.6. in \cite{PW}. The proof of this part only uses that $\{c_n\}_n$ tends to $\infty$.
Theorem 4.5 in \cite{EW2} shows that points of the form $(z,\tilde g_{\pm}(z))$ belong to $X^*_{\CC^2}$ if $z\in E$. 
\end{proof}

\section{The pluripolar hull above $E$}
As a proof of principle we will show in this section that under a strong condition on the size of the deleted intervals,
 for every $z\in E$ the fiber $X^*_{\CC^2}\cap (\{z\}\times\CC)$ equals $\{(z, w):\   w^2=f_{a,b}(z)\}$. Here we have written $f_{a,b}$ also for its finely analytic extension over $E$.

\begin{lemma}\label{fa} Let $\calc(a,b)$ be a Cantor set as before, with $b_j-a_j=e^{-jc_j}$ and $c_j=(j+2)!$, in particular the condition of Lemma \ref{Lem4.2} is satisfied. Let $E$ and $F_N$
 be the sets constructed in Lemma \ref{Lem4.2} and $f=f_{a,b}$ the finely holomorphic function on $V=(\CC\setminus\calc(a,b))\cup E$. Then for $N$ sufficiently large
there is a plurisubharmonic function $v$ on $\CC^{2}$ such that
$\{v=-\infty\}\cap([a_0,b_0]\setminus F_N\times\CC)=\Gamma_f\cap([a_0,b_0]\setminus F_N\times\CC)$. 
\end{lemma}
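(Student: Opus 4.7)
The plan is to build $v$ explicitly as a weighted series of truncated PSH pieces tied to the partial products of $f=f_{a,b}$. I write $f^n = P_n^\circ/Q_n$ with $P_n^\circ(z)=(z-b_0)\prod_{i=1}^n(z-a_i)$ and $Q_n(z)=(z-a_0)\prod_{i=1}^n(z-b_i)$, both monic of degree $n+1$, and set $\calp_n(z,w):=wQ_n(z)-P_n^\circ(z)$, so that $w-f^n(z)=\calp_n(z,w)/Q_n(z)$ and $\log|\calp_n|$ is PSH on $\CC^2$. Assume without loss of generality $b_0-a_0=1$.

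The first step is to collect the estimates dictated by the Cantor-set geometry. Using $p_j=e^{-jc_j/2}$ in Lemma \ref{Lem4.2}, every $z\in[a_0,b_0]\setminus F_N$ satisfies $|z-b_i|\ge e^{-ic_i/2}$ for $i\ge N$, so with $c_j=(j+2)!$ one gets
\[0\ge \log|Q_n(z)|\ge -\sum_{i=N}^n \tfrac{ic_i}{2}-K_N\ge -(1+o(1))\tfrac{nc_n}{2},\]
while Lemma \ref{lem4.1} supplies $|f(z)-f^n(z)|\le Ce^{-(n+1)c_{n+1}/2}$ uniformly for $z$ in compacts of $E$. I then put $D_n:=(n+1)c_{n+1}/4$ and
\[v_n(z,w):=\max\bigl(\log|\calp_n(z,w)|,\ -D_n\bigr),\]
which is PSH on $\CC^2$, bounded below globally by $-D_n$ and above on $\{|z|+|w|\le R\}$ by $(n+1)\log(R+M_0)+O(1)$. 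On the strip $\Sigma_N:=([a_0,b_0]\setminus F_N)\times\CC$ the estimates split into two regimes: on the graph of $f$ one has $\log|\calp_n|\le-2D_n$ eventually, so $v_n=-D_n$; off the graph one has $\log|\calp_n|\ge -(1+o(1))nc_n/2+\log|w-f(z)|$, which overtakes $-D_n$ because $D_n/(nc_n)=(n+1)(n+3)/4\to\infty$, so $v_n=\log|\calp_n|\ge -(1+o(1))nc_n/2$.

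Next I set $\gamma_n:=1/(n^2(\log n)^2 c_n)$ for $n\ge N_0$ and define $v:=\sum_{n\ge N_0}\gamma_n v_n$. The weights are dictated by two opposing requirements: $\sum\gamma_n\cdot nc_n=\sum 1/(n(\log n)^2)<\infty$ so that off the graph the series stays bounded below, while $\sum\gamma_n D_n\sim\sum 1/(\log n)^2=\infty$ so that on the graph the series sums to $-\infty$. The factorial growth $c_j=(j+2)!$ provides exactly the gap $D_n/(nc_n)\to\infty$ that makes both compatible. The uniform bound $\gamma_n v_n\le O(1/(n(\log n)^2 c_n))$ on each compact is summable, so after subtracting the compact-maximum of each $v_n$ the partial sums form a decreasing sequence of nonpositive PSH functions, yielding $v$ PSH on $\CC^2$; finiteness off the graph guarantees $v\not\equiv-\infty$. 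A final pointwise check confirms $\{v=-\infty\}\cap\Sigma_N=\Gamma_f\cap\Sigma_N$: on the graph the $D_n$-estimate gives divergence; off the graph the series converges, a potentially large contribution $-\gamma_k D_k$ at an exceptional point $(z,f^k(z))$ being only a single finite term.

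The hard part will be ensuring that $v$ is genuinely PSH on $\CC^2$ with the right $-\infty$-locus, rather than merely finely PSH. A naive $\limsup_n \log|\calp_n|/A_n$ is almost worthless here: its USC-regularization collapses to a harmonic function, because $\log|\calp_n|/A_n$ is close to zero at Euclidean-close points off the Cantor set, which would erase the $-\infty$-behaviour on $\Gamma_f$. The hard truncation at $-D_n$ is what rescues the construction, fixing an on-graph value a full factor of $n$ below any off-graph value; the explicit choice $c_j=(j+2)!$ is what makes that separation available. Once this is in place, PSH-globality on $\CC^2$ follows by a standard sum-of-PSH-functions argument.
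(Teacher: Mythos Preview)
Your construction is essentially the paper's own proof. The paper writes $v_n(z,w)=\log(|w-f_n(z)||Q_n(z)|)=\log|\calp_n(z,w)|$, truncates at $\log p_{n+1}=-(n+1)c_{n+1}/2$ (your $-D_n$ differs only by a factor $2$), and sums with weights $e_n/(nc_n)$ where $\sum e_n<\infty$ but $\sum e_n\,(n+1)c_{n+1}/(nc_n)=\infty$; your choice $\gamma_n=1/(n^2(\log n)^2 c_n)$ is exactly $e_n/(nc_n)$ with $e_n=1/(n(\log n)^2)$. The on-graph/off-graph dichotomy, the use of $|P_n|,|Q_n|\le 1$ on $[a_0,b_0]$, the lower bound $|Q_n(z)|\ge\prod e^{-jc_j/2}$, and the observation that $(n+1)c_{n+1}/(nc_n)\to\infty$ drives the construction in both versions.
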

\begin{proof} Without loss of generality we can assume that $[a_0,b_0]=[0,1]$. Set $p_j=e^{-jc_j/2}$. Let 
\[f_n(z)=\frac{z-b_0}{z-a_0}\prod_{i=1}^n \frac{z-a_i}{z-b_i}=\frac{P_n(z)}{Q_n(z)}, \]
where $P_n$ and $Q_n$ are monic polynomials of degree $n+1$.
We have for $C>1$
\begin{align}
    |Q_n(z)|\le (C+1)^{n+1} \quad \text{if $|z|<C$},\\
    \intertext{for $z\in [a_0,b_0]\setminus F_N$ we have}
    |P_n(z)|<1,\quad |Q_n(z)|<1,\\
    \intertext{and}
    |Q_n(z)|\ge \prod_{j=1}^n \frac{e^{-jc_j}}{p_j}=\prod_{j=1}^n e^{-j c_j/2} .
\end{align}
Thus on $[a_0,b_0]\setminus F_N$ we have the following estimates
\begin{equation}
    \begin{split}
        |f(z)-f_n(z)||Q_n(z)|&\le \left|\left(\prod_{j=n+1}^\infty \left(1+\frac{b_i-a_i}{z-b_i}\right)-1\right)P_n(z)\right|\\
        &\le \prod_{j=n+1}^\infty(1+p_j)-1\le \exp(\sum_{j=n+1}^\infty p_j)-1\le 2 p_{n+1},
    \end{split}
\end{equation}
    if $n\ge 2$. 
    
    Next for every $\delta>0$ and $|w-f(z)|\ge\delta$
    \begin{equation}
     |w-f_n(z)||Q_n(z)|\ge \delta \prod_{j=1}^n e^{-jc_j/2}.
\end{equation}
Now consider the plurisubharmonic functions \[v_n(z,w)= \log(|w-f_n(z)||Q_n(z)|)\quad\text{ on $\CC^2$.}\] Note that
\begin{equation}
    \frac{(n+1)c_{n+1}}{n c_n}=\frac{(n+1)(n+2)}{n} \to \infty, \quad \text{as $n\to \infty$.}
\end{equation} Let $\{e_n\}_n$ be a sequence of positive numbers such that $\sum e_n$ is finite but $\sum e_n \frac{-(n+1)c_{n+1}}{n c_n}=-\infty$, and form
\begin{equation}
    v(z,w)=\sum_{n=1}^\infty\frac{\max \{v_n(z,w),\log p_{n+1}\}}{nc_n}e_n.
\end{equation}
Because $v_n(z,w)\le (n+2)\log(C+1)$ on $\{|z|,|w|<C\}$, for $C$ sufficiently large, the function  $v(z,w)$ is plurisubharmonic on $\CC^2$.
For $z\in \calc(a,b)\setminus F_N$ the function $v(z,w)$ has the following properties.
If $w=f(z)$, then $v_n(z,w)\le \log 2 +\log p_{n+1}$ and therefore $v(z, f(z))=-\infty$. On the other hand, if $|w-f(z)|\ge \delta$, then there exists $n_0>0$ with 
\begin{equation}
    v_n(z,w)\ge \log(\delta) +\sum_{j=1}^n(-jc_j/2)\ge \log\delta -n c_n, 
\end{equation}
therefore $v(z,w)$ is finite.

\end{proof}

\begin{theorem}\label{thm5.3}Let $\calc(a,b)$ be a Cantor set that satisfies the conditions of Lemma \ref{fa}.
Let $X$ denote the graph of $g_+$ over $\CC\setminus \calc(a,b)$ and let $g$ denote the maximal finely analytic extension of $g_+$. Then $X^*_{\CC^2}\cap \{(D\cup E)\times \CC\}=\Gamma_{g}\cap \{(D\cup E)\times \CC\}$.
\end{theorem}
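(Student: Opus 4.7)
My plan is to prove the two inclusions separately, stratifying the base set as $D \cup E = D \sqcup (E \cap \calc(a,b))$. The inclusion $X^*_{\CC^2} \cap ((D\cup E)\times\CC) \supseteq \Gamma_g \cap ((D\cup E)\times\CC)$ is already in hand: Corollary \ref{cor1} places the graphs of $g_\pm$ over $D$ and of $\tilde g_\pm$ over $E$ inside $X^*_{\CC^2}$, and these four branches comprise the values of the multi-valued fine extension $g$ over $D \cup E$. For the reverse inclusion, if $z_0 \in D = \CC \setminus \calc(a,b)$ then the fiber of $X^*_{\CC^2}$ above $z_0$ has at most two elements by Theorem 3.6 of \cite{PW} (as invoked in Corollary \ref{cor1}), so equality holds over $D$. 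What remains is: for each $z_0 \in E \cap \calc(a,b)$, if $(z_0, w_0) \in X^*_{\CC^2}$ then $w_0^2 = f_{a,b}(z_0)$.

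For every such $(z_0, w_0)$ with $w_0^2 \neq f_{a,b}(z_0)$, I produce a plurisubharmonic function on $\CC^2$ that is identically $-\infty$ on $X = \Gamma_{g_+}$ but finite at $(z_0, w_0)$. The function $v$ of Lemma \ref{fa} captures the graph of $f_{a,b}$ via $\log|wQ_n(z) - P_n(z)|$; to capture the two-sheeted square-root graph $\{w^2 = f_{a,b}(z)\}$ I replace this building block by
\[
V_n(z, w) := \log\bigl|w^2 Q_n(z) - P_n(z)\bigr| = \log|Q_n(z)| + \log|w^2 - f_n(z)|,
\]
which is plurisubharmonic on $\CC^2$ since $w^2 Q_n(z) - P_n(z)$ is a polynomial in $(z,w)$, and form
\[
V(z, w) := \sum_{n=1}^\infty \frac{\max\bigl(V_n(z, w),\, \log(2 p_{n+1})\bigr)}{n c_n}\, e_n,
\]
with $\{e_n\}$ chosen exactly as in Lemma \ref{fa}. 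The bound $V_n \le (n+3)\log(C+1)$ on $\{|z|,|w|<C\}$ combined with the factorial growth of $c_n$ makes $V$ locally bounded above, hence plurisubharmonic on $\CC^2$.

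The verification recycles the two estimates of Lemma \ref{fa}. On the graph $(z, g_+(z))$ the identity $g_+(z)^2 = f_{a,b}(z)$ gives $V_n(z, g_+(z)) = \log|Q_n(z)(f_{a,b}(z) - f_n(z))|$, and a uniform version of the Lemma \ref{fa} upper bound yields $|Q_n(z)(f_{a,b} - f_n)(z)| \le C(K)\, p_{n+1}$ for $n$ large, uniformly on any compact $K \Subset D$. Hence $\max(V_n(z, g_+(z)),\, \log(2p_{n+1}))$ is eventually bounded above by $\log p_{n+1}$ plus a constant, and summation against $e_n/(n c_n)$ drives $V(z, g_+(z))$ to $-\infty$ by the defining property $\sum e_n (n+1)c_{n+1}/(n c_n) = \infty$. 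At $(z_0, w_0)$, with $z_0 \in E \cap \calc(a,b) \subset [a_0,b_0]\setminus F_N$ and $\delta := |w_0^2 - f_{a,b}(z_0)| > 0$, the lower bound $|Q_n(z_0)| \ge \prod_{j=1}^n e^{-jc_j/2}$ from Lemma \ref{fa} together with $|w_0^2 - f_n(z_0)| \to \delta$ gives $V_n(z_0, w_0) \ge \log(\delta/2) - \tfrac12 \sum_{j\le n} jc_j$ for large $n$. Because $c_j = (j+2)!$ makes the ratios $(jc_j)/(nc_n)$ decay fast enough that $\sum_{j\le n} jc_j = O(nc_n)$, weighting by $e_n/(n c_n)$ yields a convergent series and $V(z_0, w_0) > -\infty$, contradicting $(z_0, w_0) \in X^*_{\CC^2}$.

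The main obstacle is the simultaneous compatibility of these two estimates inside a single plurisubharmonic function. The upper bound on $X$ must remain effective even when $z$ approaches $\calc(a,b)$ along compacta of $D$, while the lower bound at $(z_0, w_0)$ depends on the specific polynomial decay of $|Q_n(z_0)|$ valid only for $z_0 \in [a_0,b_0]\setminus F_N$. The rapid growth $c_j = (j+2)!$ is exactly what arranges both at once: it forces $(n+1)c_{n+1}/(n c_n) \to \infty$ (driving the tail of $V(z, g_+(z))$ to $-\infty$) while keeping $(n c_n)^{-1}\sum_{j\le n} jc_j$ bounded (keeping $V(z_0, w_0)$ finite), so that a single weight sequence $\{e_n\}$ as in Lemma \ref{fa} serves in both directions.
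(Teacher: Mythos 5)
Your proposal is correct and follows essentially the same route as the paper: the paper obtains your function $V$ in one stroke as $u(z,w)=v(z,w^{2})$ with $v$ the function of Lemma \ref{fa}, which is literally the sum of your building blocks $V_n(z,w)=v_n(z,w^{2})$. The only (harmless) deviation is that you verify $V=-\infty$ on the graph of $g_+$ over all of $D$ by a direct uniform estimate on compacta, where the paper instead notes that $v=-\infty$ on the graph of $f_{a,b}$ over the positive-capacity set $E$ and invokes Theorem \ref{thmEW} to propagate $-\infty$ to the whole graph.
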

\begin{proof}
Because of Corollary \ref{cor1}, we only have to prove that for $z\in E$ $X^*_{\CC^2}\cap (\{z\}\times \CC= \{(z, g_+(z)), (z,g_-(z))\}$. 
The set $E$ has positive length and hence positive capacity. The function $v$  constructed in the previous lemma is plurisubharmonic function and equals $-\infty$ on $\{(z,f_{a,b} (z)), z\in E\}$. Then by Theorem \ref{thmEW} it equals $-\infty$ on the graph of $f_{a,b}$. 
Consider the function $u(z,w)=v(z,w^2)$. The function $u$ is plurisubharmonic and equals $-\infty$ on the graph of $g_+$ and for $z\in E$ we have $u(z,w)=-\infty$ if and only if $w$ is one of the two values of $g(z)$. 
\end{proof}

\section{Blaschke products}
In this section we will consider Blaschke products of the form
\begin{equation}\label{Bl1}
B(z)=z^l\prod_{j=1}^\infty\left(\frac{|a_j|}{a_j}\frac{a_j-z}{1-\overline{a_j}z}\right)=z^l\prod_{j=1}^\infty\left(\frac{1}{|a_j|}\frac{a_j-z}{1/\overline{a_j}-z}\right)
    \end{equation}
    on the unit disc $\DD$.
In the next lemma we formulate a condition similar to \eqref{eq4.2} that guarantees that such a Blaschke product extends as a finely holomorphic function over a part $E$ of the boundary of $\DD$, while $E$ is in the closure of the zeros of $B$. The lemma leads then to an easy proof of an extension of Theorem 1 in \cite{Z}.

It is well known that under the Blaschke condition $\sum_k(1-|a_k|)<\infty$ the product \eqref{Bl1} represents a function that is holomorphic on $\DD\cup (\DD^e\setminus X)$, where $\DD^e=\{z: |z|>1\}$ and $X=\{1/\overline{a_j}\}$, the set of poles of $B$.
\begin{lemma}\label{LemBl1} Let $S=\{e^{i\phi}:\ \alpha\le\phi\le\beta\}$ be a proper segment of $\partial \DD$, $(a_j)_j$ a sequence in $\DD$ that satisfies the Blaschke condition and such that $S\subset \overline{\{a_j,j=1,2, \ldots\}}$. Let $B$ be the Blaschke product with zeros $\{a_j\}$. Suppose that there exists an increasing sequence of positive numbers $\{c_j\}$ such that $\sum_{j=1}^\infty\frac{1}{jc_j}$ is finite and that
\begin{equation}\label{eqBl1}\left|a_j-\frac1{\overline{a_j}}\right|=e^{-jc_j} .\end{equation}
Then there exists a finely open set $E\subset \CC$ that meets $S$ with the property that $B$ extends as a finely holomorphic function over $E$. 
\end{lemma}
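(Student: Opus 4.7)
The plan is to follow the blueprint of Lemma \ref{Lem4.2} closely, with the arc $S$ playing the role of $[0,1]$ and the Blaschke poles $1/\overline{a_j}$ (all lying in $\CC \setminus \overline{\DD}$) playing the role of the points $b_j$. First I would establish a Blaschke-product analogue of Lemma \ref{lem4.1}. Writing each factor as
$$\frac{|a_j|}{a_j}\frac{a_j-z}{1-\overline{a_j}z} = \frac{1}{|a_j|}\left(1 + \frac{a_j-1/\overline{a_j}}{1/\overline{a_j}-z}\right),$$
one sees that it differs from a unimodular constant by a term of order $e^{-jc_j}/|z - 1/\overline{a_j}|$. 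Taking $p_j := e^{-jc_j/2}$, the product then converges uniformly on Euclidean compact subsets of
$$E_N := \bigcap_{j\ge N}\{z: |z - 1/\overline{a_j}| \ge p_j\} \setminus \{1/\overline{a_j}: 1\le j<N\},$$
since on $E_N$ the total error is dominated by $\sum p_j < \infty$; the Blaschke condition handles the normalizing factors $1/|a_j|$.

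Next I would form the compact sets $F_N := \bigcup_{j \ge N}\{z: |z - 1/\overline{a_j}| \le p_j\}$ and $J_N := S \cup F_N$. Using $\Capa\{|z - 1/\overline{a_j}| \le p_j\} = p_j$ and Theorem 5.1.4 of \cite{Ra}, one obtains
$$\frac{1}{\log(1/\Capa(F_N))} \le \sum_{j \ge N} \frac{2}{jc_j} \to 0,$$
so $\Capa(F_N) \to 0$ as $N \to \infty$, while $\Capa(J_N) \ge \Capa(S) > 0$ is a fixed positive constant. Hence for $N$ large, $u := g_{F_N} - g_{J_N}$ is a non-negative, finely continuous function (difference of subharmonic Green functions with $F_N \subset J_N$) satisfying $u(\infty) = \log\Capa(J_N) - \log\Capa(F_N) > 0$, so that $\tilde E := \{u > 0\}$ is a finely open subset of $\CC \setminus F_N$. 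Repeating the Riemann map argument of Lemma \ref{Lem4.2} with $\varphi$ mapping $\DD$ onto the unbounded component of $\CC \setminus J_N$ and sending $0$ to $\infty$, the function $v := u \circ \varphi$ is harmonic on $\DD$ with $v(0) > 0$; hence $v^* > 0$ on a set $\Gamma \subset \partial\DD$ of positive measure, and boundary-regularity of $\Omega$ forces $\varphi(e^{i\theta}) \in J_N \setminus F_N \subset S$ for $\theta \in \Gamma$. Setting $E := \tilde E \setminus \{1/\overline{a_j}: j<N\} \subset E_N$ then gives a finely open set meeting $S$ on which $B$ is a uniform limit of rational functions on Euclidean compacta, hence finely holomorphic.

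The main technical obstacle I anticipate is verifying the hypotheses of the Riemann map argument: that the unbounded component of $\CC \setminus J_N$ has locally connected and Dirichlet-regular boundary. In Lemma \ref{Lem4.2} this was immediate because $J_N$ was a compact subset of the real segment $[0,1]$; here the closed disks in $F_N$ are scattered in $\CC \setminus \overline{\DD}$ and can cluster onto $S$. I would handle this by taking $N$ large enough that the radii $p_j = e^{-jc_j/2}$ are much smaller than the separation between any two distinct poles $1/\overline{a_j}$, using $c_j \to \infty$ together with the Blaschke condition. Then each connected component of $J_N$ is either a subarc of $S$ with finitely many disks grafted on, or an isolated closed disk; each such component has locally connected Jordan boundary, so Carathéodory's theorem extends $\varphi$ continuously to $\overline{\DD}$, and each boundary point passes Wiener's criterion, making $g_{J_N}$ vanish continuously on $\partial\Omega$ as required.
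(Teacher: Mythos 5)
Your proposal follows essentially the same route as the paper: the same factor estimate leading to the same $E_N$ with $p_j=e^{-jc_j/2}$, the same sets $F_N$ and $J_N=S\cup F_N$, the capacity comparison $\Capa F_N\to 0$ versus $\Capa J_N\ge \Capa S>0$, and the same function $u=g_{F_N}-g_{J_N}$ with the Riemann-map argument carried over verbatim from Lemma \ref{Lem4.2}. (One small quibble: since the poles $1/\overline{a_j}$ accumulate on $S$, no choice of $N$ makes the radii $p_j$ smaller than all pairwise pole separations, so your proposed fix for local connectivity is not literally available; but the conclusion holds anyway because $p_j\to 0$, and the paper itself simply asserts that the proof continues as in Lemma \ref{Lem4.2}.)
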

\begin{proof} 
The proof goes much the same as the proof of Lemma \ref{Lem4.2}. First observe that \[\left|\frac{1}{|a_j|}\frac{a_j-z}{1/\overline{a_j}-z}-1\right|=\left|\frac{1}{|a_j|}\frac{a_j-1/\overline{a_j}}{1/\overline{a_j}-z}-(1-1/|a_j|)\right|\le \frac{1}{|a_j|}\frac{|a_j-1/\overline{a_j}|}{|1/\overline{a_j}-z|}+\frac{(1-|a_j|)}{|a_j|}.\]
It follows that the product \eqref{Bl1} converges uniformly on compact sets in $$E_N=\left(\bigcap_{j=N}^\infty \{z: |1/\overline{a_j}-z|\ge |a_j-1/\overline{a_j}|/e^{-jc_j/2}\}\right)\setminus\{1/\overline{a_1},1/\overline{a_2},\ldots, 1/\overline{a_{N-1}}\}.$$

Let $$F_N=\bigcup_{j=N}^\infty \{z\in \CC:  |z-1/\overline{a_j}|\le e^{-jc_j/2}\}.$$
Put $J_N=S\cup F_N$ Note that  $J_N$  is compact, its complement in $\CC\cup\{\infty\}$ is simply connected and $\Capa J_N\ge \Capa S=\sin((\beta-\alpha)/4)$.  As in the proof of Lemma \ref{Lem4.2} we can take  $N$ so large that $\Capa F_N<\Capa J_N$. Now form again $u(z):=g_{F_N}- g_{J_N}$ and the proof continues just like the proof of Lemma \ref{Lem4.2}.
\end{proof}{}
\begin{theorem}\label{thmBl1} Let $S=\{e^{i\phi}:\ \alpha\le\phi\le\beta\}$ be a proper segment in $\partial \DD$ and let $B$ be a Blaschke product with zero set $\{a_k, b_l\}$, where $\alpha\le \arg a_k\le \beta$ and $\arg b_k\notin [\alpha,\beta]$. Suppose that the $a_k$ satisfy \eqref{eqBl1}. 

Let $U$ be a domain with $S\subset U$ and $E\subset S$ the set constructed in Lemma \ref{LemBl1}. For $f$ holomorphic on $U$ put $g=fB$ on $(U\setminus (\partial\DD\cup X))\cup E$. 
Then 
\begin{itemize}
    \item \[\Gamma^*_{g|_{\DD\cap U}}\supset\Gamma_{g|_{(U\setminus (S\cup X))\cup E}}\]
    \item \[\Gamma^*_{g|_{\DD^e\cap U}}\supset\Gamma_{g|_{(U\setminus (S\cup X))\cup E}}\]
\end{itemize}
In fact both $\Gamma^*_{g|_{\DD\cap U}}$ and $\Gamma^*_{g|_{\DD^e\cap U}}$ contain the graph of the maximal finely analytic extension of $g$.
\end{theorem}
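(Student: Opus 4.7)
The plan is to combine Lemma \ref{LemBl1}, which produces a fine analytic extension of $B$ across $E$, with Theorem \ref{thmEW}, which promotes such extensions to statements about pluripolar hulls of graphs.

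First I invoke Lemma \ref{LemBl1} on the sequence $\{a_k\}$, obtaining a finely open neighborhood of $E\subset S$ on which the product \eqref{Bl1} (restricted to the factors corresponding to the $a_k$) converges uniformly on compact sets and therefore defines a finely holomorphic extension. The remaining factors corresponding to the zeros $b_l$ are ordinary-holomorphic in an open neighborhood of $S$ since the $b_l$ are bounded away from $S$, and $f$ is holomorphic on $U$; thus $g=fB$ is finely holomorphic on the finely open set $W:=(U\setminus(\partial\DD\cup X))\cup E$. Crucially, the extended product agrees with $B$ wherever $B$ was already defined, so the resulting finely holomorphic function on $W$ restricts on $U\cap\DD$ to $g|_{\DD\cap U}$ and on $U\cap(\DD^e\setminus X)$ to $g|_{\DD^e\cap U\setminus X}$.

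Next I note that $W$ is finely connected: $E$ has positive capacity and the fine neighborhood provided by Lemma \ref{LemBl1} meets both ordinary components $\DD\cap U$ and $(\DD^e\setminus X)\cap U$. Consequently the restriction $g|_{\DD\cap U}$ admits a single-valued finely analytic continuation through $E$ which coincides with $g|_{\DD^e\cap U\setminus X}$ on the outer piece. Applying Theorem \ref{thmEW} to $X_1:=\Gamma_{g|_{\DD\cap U}}$ (whose projection to $\CC$ is a non-empty Euclidean open set, hence of positive capacity), the hull $(X_1)^*_{\CC^2}$ contains the graph of the maximal finely analytic continuation of $g|_{\DD\cap U}$. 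By the preceding paragraph this continuation includes $g$ on all of $(U\setminus(S\cup X))\cup E$, which gives the first bulleted inclusion and simultaneously the final, stronger assertion. The second bullet is obtained by the same argument with $\DD\cap U$ replaced by $\DD^e\cap U$.

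The main subtlety is to verify that the finely holomorphic extension produced by Lemma \ref{LemBl1} genuinely connects the two ordinary-holomorphic branches of $B$ on the two sides of $\partial\DD$ into a single-valued finely analytic function rather than two separate ones; but since both branches are computed by the same product \eqref{Bl1}, and the lemma proves uniform convergence of this product on compact subsets of a fine neighborhood of $E$, the one-sided limits from either side agree with the extended value. Everything else is bookkeeping and a direct appeal to Theorem \ref{thmEW}.
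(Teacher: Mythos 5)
Your proof is correct and follows essentially the same route as the paper's (which is even terser): combine Lemma \ref{LemBl1} with the holomorphy of $f$ and of the $b_l$-factors near the open arc to get fine holomorphy of $g$ on the fine domain $(U\setminus(S\cup X))\cup E$, then apply Theorem \ref{thmEW}. Your added care about the $b_l$-factors and the fine connectedness of the domain is a welcome elaboration of details the paper leaves implicit.
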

\begin{proof} The function $f$ is holomorphic on $U$ and $B$ is finely holomorphic on $\CC\setminus (\partial \DD\cup X)\cup E$. therefore $g$ is finely holomorphic on the fine domain $(U\setminus(S\cup X))\cup E$. Theorem \ref{thmEW} now gives the result.
\end{proof}
\begin{corollary}\label{corBl1}
Let $B$ be a Blaschke product as in Theorem \ref{thmBl1} with the additional property that it admits no holomorphic extension outside $\DD$. Take $f(z)=p.v.\log(z+2)$ on $|z|<2$. Then of $g=fB$ cannot be extended outside $\DD$ but $\Gamma^*_{g|_{\DD\cap U}}$ contains the graph of the maximal finely holomorphic extension of $g$. In particular, all fibers $\Gamma^*_{g|_{\DD\cap U}}\cap (\{z\}\times \CC)$ will be infinite, and most of them will be polar sets in the complex line $\{z\}\times\CC$.
\end{corollary}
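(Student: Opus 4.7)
The plan is to handle the three assertions of the corollary in sequence. For non-extendability: since $f(z)=\log(z+2)$ vanishes only at $z=-1\in\DD$, it is holomorphic and nowhere zero on a neighborhood of $\partial\DD$, so any holomorphic extension of $g$ from the exterior side across a boundary point $z_0$ would produce a corresponding extension of $B=g/f$ outside $\DD$, contradicting the hypothesis on $B$. For the containment: take $U=\{|z|<2\}$, so that $f$ is holomorphic on $U\supset\overline{\DD}$ and the arc $S\subset U$, and Theorem~\ref{thmBl1} applies directly, giving $\Gamma^*_{g|_{\DD\cap U}}\supset\Gamma_{\tilde g}$ for the maximal finely holomorphic extension $\tilde g$ of $g$.

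The substantive task is showing that $\tilde g$ is countably multi-valued with generically polar fibers. Fine-extend $g$ across $E$ via Lemma~\ref{LemBl1}: the fine neighborhood of $\DD\cup E$ meets the exterior $\DD^e$ in a set containing a Euclidean disk $W$, on which the extension is a classical holomorphic function $G_0$. Since $f$ is nonvanishing on $W$, the ratio $H:=G_0/f$ is holomorphic on $W$ and continues meromorphically along paths in $\DD^e$. Now analytically continue $G_0$ inside the multiply-connected region $\DD^e\setminus(\{-2\}\cup\{\text{poles of }H\})$: a loop with winding number $k$ around the branch point $-2$ of $f$ leaves $H$ unchanged and sends $f$ to $f+2\pi ik$, so it transforms $G_0$ into the distinct branch $G_k:=G_0+2\pi ikH$. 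Each $G_k$ fine-extends back across $E$ to a branch of $\tilde g$, and by Theorem~\ref{thmBl1} all these branches sit inside $\Gamma^*_{g|_{\DD\cap U}}$. Hence, at any $z\in E$ where the fine extension $\tilde H$ of $H$ is nonzero---a fine-dense subset of $E$ by the identity principle for finely holomorphic functions---the fiber contains the pairwise distinct countable set $\{\tilde g(z)+2\pi ik\,\tilde H(z):k\in\ZZ\}$, which is polar because countable subsets of $\CC$ have zero logarithmic capacity.

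The technical heart of the argument, and the main obstacle, is the multi-valuedness step inside $\DD^e$. One must verify that the continuation $H$ of $G_0/f$ is not identically zero and has only isolated singularities, so that the branches $G_k$ are genuinely distinct, and that each classical continuation in $\DD^e$ really constitutes a branch of the maximal \emph{finely} analytic extension (so that Theorem~\ref{thmBl1} captures all of them in the hull). The assertion that ``most'' fibers rather than all are polar then follows from a standard slicing argument: the pluripolar hull of a graph is itself pluripolar, and pluripolar sets in $\CC^2$ have polar fibers outside a pluripolar subset of the projection.
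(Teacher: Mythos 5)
The paper states this corollary without proof, so there is nothing to compare line by line; your argument is correct and supplies exactly the intended mechanism: Theorem \ref{thmBl1} applied with $U=\{|z|<2\}$ gives the containment of the graph of the maximal fine continuation, non-extendability follows because $f$ is zero-free near $\partial\DD$ so an extension of $g$ would yield one of $B=g/f$, and the infinitude of the fibers comes from winding around the branch point $-2$ inside $\DD^e\setminus X$, which produces the branches $g+2\pi ikB$ ($k\in\ZZ$) of the maximal finely analytic continuation, all of whose graphs lie in the hull by the theorem. The only caveat is cosmetic: at a zero of $B$ these branch values coincide, so ``all fibers are infinite'' should be read over the set where $B\neq 0$ (whose complement in the relevant fine domain is polar, since the zero set of a nonconstant finely holomorphic function is polar), and your closing slicing remark --- that $\Gamma^*$ is itself pluripolar and hence has polar fibers outside a polar set of base points --- correctly accounts for the paper's hedge that only ``most'' fibers are polar, since the hull's fibers could a priori exceed the countable set of branch values.
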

\begin{remark}
\begin{enumerate}
    \item In \cite{Z} Zwonek proves his version of Corollary \ref{corBl1} under the following condition on $B$:
\begin{equation}
   \lim_{r\to\infty} \Capa(\{ z\in\DD : |B(z)|\le r\})=0.\label{Z1}
\end{equation}
Siciak \cite{S} replaced this condition by the seemingly weaker condition: There exists $r_0$ with $0<r_0<1$ such that 
\[\Capa(\{ z\in\DD : |B(z)|\le r_0\})<1.\]
In fact it is not known whether Siciak's condition is really strictly weaker. In Proposition 7 of \cite{Z} two conditions are given that together guarantee \eqref{Z1}. Our condition is equivalent to Zwonek's second condition; we don't need his first.
\item 
In both \cite{Z} and \cite{S} the conditions on the capacity and the zeros of the Blaschke product are on all of $\DD$, whereas in Theorem \ref{thmBl1} and its corollary the condition on the zeros is only required in a sector. 
\item In \cite{ESZ} similar results as in Theorem \ref{thmBl1} and its corollary are proved by potential theoretic methods. The explanation by fine holomorphy is new.
\end{enumerate}
\end{remark}

\begin{remark}
While it would be possible to obtain more precise information of the pluripolar hull of the graph of the Blaschke product along the lines of Lemma \ref{fa} and Theorem \ref{thm5.3}, it is not clear how to do this in general for the products $fB$. We will not pursue this further. \end{remark}{}

\end{document}